\newtheorem{thm}{Theorem}[section]
\newtheorem{lem}[thm]{Lemma}
\newtheorem{quest}[thm]{Question}
\newtheorem{prop}[thm]{Proposition}
\newcommand{\R}{\mathbb R}
\newcommand{\Z}{\mathbb Z}
\newcommand{\G}{\mathrm{Mod}^{\pm}(\Sigma_2)}
\newcommand{\GL}{\mathrm{GL}}
\newcommand{\PGL}{\mathrm{PGL}}
\newcommand{\Id}{\mathrm{Id}}
\newcommand{\Mod}[2]{\mathrm{Mod}_{#1}\hspace{-0.3em}\left(#2\right)}
\newcommand{\EMod}[2]{\mathrm{Mod}^\pm_{#1}\hspace{-0.3em}\left(#2\right)}
\newcommand{\MS}[1]{\mathrm{Mod}(\Sigma_{0,#1})}
\newcommand{\EMS}[1]{\mathrm{Mod}^\pm(\Sigma_{0,#1})}
\newcommand{\s}[2][]{\sigma_{#2}^{#1}}
\begin{document}
\title{Generating Extended Mapping Class Groups with Two Periodic Elements}
\author{Reid Harris}
\date{}
\begin{abstract}The extended mapping class group of a surface $\Sigma$ is defined to be the group of isotopy classes of (not necessarily orientation-preserving) homeomorphisms of $\Sigma$. We are able to show that the extended mapping class group of an $n$-punctured sphere is generated by two elements of finite order exactly when $n\not=4$. We use this result to prove that the extended mapping class group of a genus 2 surface is generated by two elements of finite order.
\end{abstract}

\maketitle

\section{Introduction}

Let $\Sigma_{g,n}$ be an orientable, genus $g$ surface with $n$ punctures and let $\Sigma_g = \Sigma_{g,0}$. We let $\Mod{}{\Sigma_{g,n}}$ denote the mapping class group of $\Sigma_{g,n}$, i.e. isotopy classes of orientation-preserving homeomorphisms $\Sigma_{g,n}\to\Sigma_{g,n}$, and let $\EMod{}{\Sigma_{g,n}}$ be the corresponding extended mapping class group, i.e. isotopy classes of orientation-preserving or reversing homemorphisms $\Sigma_{g,n}\to\Sigma_{g,n}$. Our concern in this paper will mainly be on the groups $\EMod{}{\Sigma_2}$ and $\EMod{}{\Sigma_{0,n}}$. We consider the following question:

\begin{quest}
Find minimal generating sets $S$ of $\EMod{}{\Sigma_{g,n}}$ such that each element of $S$ is of finite order.
\end{quest}

\subsection{Previous Work}  The problem of finding generating sets, all of whose elements satisfy a given property (e.g. finite order), is classical and has been extensively studied. In 1938, Dehn \cite{Dehn1938}, proved that $\Mod{}{\Sigma_{g,0}}$ was generated by $2g(g-1)$ Dehn twists for $g\ge 3$. Later, in 1964, Lickorish, \cite{Lick1964}, improved this to $g\ge 1$ and reduced the number of Dehn twists needed to $3g-1$. This was reduced further still to $2g+1$ in 1977 by Humphries, \cite{Hump1977}, using a subset of Lickorish's generating set. Johnson, \cite{John1983}, showed in 1983 that Humphries' generators also generate $\Mod{}{\Sigma_{g,1}}$ for $g\ge 1$. Wajnryb showed in 1996 that $\Mod{}{\Sigma_{g,n}}$ can be generated by two elements, however, these elements are not Dehn twists.

In regards to torsion generating sets, Maclachlan \cite{Mac1971} showed that $\Mod{}{\Sigma_{g}}$ is generated by a finite set of torsion elements, concluding that moduli space is simply-connected. Luo \cite{Luo2000} showed that $\Mod{}{\Sigma_{g,n}}$ is generated by torsion elements, giving specific bounds for the order of generators given $(g,n)$. In particular, he shows that $\Mod{}{\Sigma_{g, n}}$ is generated by a involutions for $g\ge 2$. Brendle and Farb \cite{BF2004} show that $\Mod{}{\Sigma_{g,n}}$, for $g\ge 1$, is generated by three elements of finite order and for $g\ge 3, n=0$ and $g\ge 4, n=1$, $\Mod{}{\Sigma_{g,n}}$ is generated by six involutions. Kassobov \cite{Kass2003} shows that $\Mod{}{\Sigma_{g,n}}$ can be generated by
\begin{itemize}[noitemsep]
    \item[] 4 involutions if $g>7$ or $g=7$ and $n$ is even,
    \item[] 5 involutions if $g>5$ or $g=5$ and $n$ is even,
    \item[] 6 involutions if $g>3$ or $g=3$ and $n$ is even,
    \item[] 9 involutions if $g=3$ and $n$ is odd.
\end{itemize}
Korkmaz shows in \cite{Kork2005} that $\Mod{}{\Sigma_g}$ is generated by two elements of finite order and later showed in \cite{Kork2019} that $\Mod{}{\Sigma_g}$ is generated by three involutions for $g\ge 8$ and four involutions for $g\ge 3$. Yildiz \cite{Yil2020} shows that $\Mod{}{\Sigma_g}$ is generated by two elements of order $g$ for $g\ge 6$.

However, the corresponding question about $\EMod{}{\Sigma_{g,n}}$ remains largely unanswered. Du showed in \cite{Du2017}, \cite{Du2019} that $\EMod{}{\Sigma_1}\cong\GL_2(\Z)$ cannot be generated by two elements of finite order and, for $g>2$, the group $\EMod{}{\Sigma_g}$ is generated by two elements of finite order. Later, Altun\"oz et. al. in \cite{APY2023} showed that $\EMod{}{\Sigma_g}$ is generated by three involutions for $g\ge 5$ and, moreover, $\EMod{}{\Sigma_{g,n}}$ can be generated by three involutions for $g= 10$, $n\ge 6$ or $g\ge 11$, $n\ge 15$. In \cite{Mon2024}, Monden shows that, for $g\ge 3$ and $n\ge 0$, the groups $\Mod{}{\Sigma_{g,n}}$ and $\EMod{}{\Sigma_{g,n}}$ are generated by two elements.

The question of whether $\EMod{}{\Sigma_2}$ can be generated by such elements remained open. In this paper, we answer in the affirmative. In the course of the proof, we show that
\begin{thm}
\label{mainthm}
The group $\EMod{}{\Sigma_{g,n}}$ can be generated by finite order elements for $g=0, n\not=4$ and $g=2, n=0$. Moreover, $\EMod{}{\Sigma_{0,4}}$ cannot be generated by finite order elements.
\end{thm}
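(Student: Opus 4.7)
The theorem has three parts, which my plan handles in sequence: positive generation for genus $0$ with $n \neq 4$ first, then the $g=2$ case using the hyperelliptic cover as a bridge, and finally the negative direction at $n=4$ via Bass-Serre theory.

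For $\EMod{}{\Sigma_{0,n}}$ with $n \neq 4$, I would exhibit two explicit finite-order generators. The case $n=3$ is immediate since $\EMod{}{\Sigma_{0,3}}$ is finite of order $12$. For $n \geq 5$, I would arrange $n-1$ of the punctures evenly along an equator of $\S^2$ with one puncture at a pole, take $\rho$ to be the order-$(n-1)$ rotation about the polar axis cycling the equatorial punctures, and take $\tau$ to be an orientation-reversing reflection through a meridian plane. Generation by $\{\rho, \tau\}$ would be verified by expressing each half-twist $\sigma_1, \ldots, \sigma_{n-1}$ and one orientation-reversing element as a word in $\rho$ and $\tau$; since $\rho$ conjugates adjacent half-twists to one another, it suffices to recover a single half-twist from $\rho$ and $\tau$, after which the braid and spherical relations of $\EMod{}{\Sigma_{0,n}}$ close the argument.

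For $\EMod{}{\Sigma_2}$, the hyperelliptic involution $\iota$ is central and $\Sigma_2/\iota \cong \Sigma_{0,6}$, yielding
\begin{equation*}
1 \to \langle \iota \rangle \to \EMod{}{\Sigma_2} \to \EMod{}{\Sigma_{0,6}} \to 1.
\end{equation*}
Starting from the two finite-order generators $g_1, g_2$ of $\EMod{}{\Sigma_{0,6}}$ from the first part, I would lift each to an $\iota$-equivariant homeomorphism $\tilde g_i$ of $\Sigma_2$ of finite order dividing $2\ord(g_i)$. The lifts generate a subgroup surjecting onto $\EMod{}{\Sigma_{0,6}}$; to recover the central $\iota$, I would need a word $w(\tilde g_1, \tilde g_2) = \iota$. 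If some naturally chosen $w$ evaluates instead to the identity, I would replace $\tilde g_1$ by $\iota \tilde g_1$ (still finite-order, by centrality of $\iota$), flipping the value of $w$ to $\iota$ without changing the projection downstairs.

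The negative direction at $n=4$ is the most delicate part and where I expect the main obstacle. Since the pure mapping class group of $\Sigma_{0,4}$ is isomorphic to $F_2$ with finite index in $\EMod{}{\Sigma_{0,4}}$, the full group is virtually free and hence the fundamental group of a finite graph of finite groups. In this decomposition every finite-order element stabilizes a vertex, and a pair of finite-order elements is jointly contained in the sub-graph-of-groups spanned by the unique shortest path connecting the corresponding vertices in the Bass-Serre tree. The plan is to compute the graph-of-groups structure explicitly — using the finite-index inclusion $\PGL_2(\Z) \hookrightarrow \EMod{}{\Sigma_{0,4}}$ coming from the hyperelliptic cover $\Sigma_1 \to \Sigma_{0,4}$, together with the amalgam decomposition $\PGL_2(\Z) \cong (\Z/2 \times \Z/2) *_{\Z/2} S_3$ — and then verify that no two vertex stabilizers in this decomposition can together generate the whole group. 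Producing the precise graph-of-groups presentation and checking that the obstruction transfers from the finite-index $\PGL_2(\Z)$-subgroup to all of $\EMod{}{\Sigma_{0,4}}$ (so that orientation-reversing or permutation elements outside $\PGL_2(\Z)$ do not bypass it) is the step I expect to be the hardest.
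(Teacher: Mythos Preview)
Your plan for the genus-$0$ case has a fatal gap. The rotation $\rho$ and the meridian reflection $\tau$ that you propose are both rigid motions of the round sphere, and as such they already satisfy the dihedral relations $\rho^{\,n-1}=1$, $\tau^2=1$, and $\tau\rho\tau^{-1}=\rho^{-1}$ \emph{at the level of homeomorphisms}. These relations therefore persist in $\EMod{}{\Sigma_{0,n}}$, so $\langle[\rho],[\tau]\rangle$ is a quotient of the dihedral group $D_{n-1}$ and in particular is finite. It cannot contain any half-twist $\sigma_i$ (which has infinite order for $n\ge4$), let alone generate $\EMod{}{\Sigma_{0,n}}$. The step ``it suffices to recover a single half-twist from $\rho$ and $\tau$'' is exactly the step that cannot be carried out. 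The paper's actual generators are not rigid symmetries: for odd $n$ one uses $T\sigma_1$ and $T\alpha_0$, while for even $n\ge6$ one takes a conjugate of $T\alpha_0$ together with $T\sigma_{n-1}^{-1}\alpha_2$, and the verification that these generate involves several pages of braid-word manipulation. Your shortcut is too good to be true, and indeed it is.

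Your genus-$2$ outline is close in spirit to the paper's Birman--Hilden argument, but the ``flip $\tilde g_1\mapsto\iota\tilde g_1$'' trick is not valid as stated. Replacing $\tilde g_1$ by $\iota\tilde g_1$ changes $w(\tilde g_1,\tilde g_2)$ by $\iota^{e}$ where $e$ is the total $\tilde g_1$-exponent of $w$; if $e$ is even nothing flips. In general the trick fails (take $G=(\Z/2)^3$ with central $\iota=(1,0,0)$: no choice of lifts of the standard generators of $(\Z/2)^2$ generates $G$). The paper instead computes the abelianization of $\EMod{}{\Sigma_2}$, shows $\iota$ dies there, and checks that the images of the two specific generators already span the abelianization, forcing any index-$2$ quotient to be trivial. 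That argument uses the explicit form of the generators found in the $n=6$ case, so it depends on fixing the first part.

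For $n=4$ your Bass--Serre plan could in principle work, but the paper's route is much shorter: there is a \emph{surjection} $\EMod{}{\Sigma_{0,4}}\twoheadrightarrow\PGL_2(\Z)$ (not an inclusion as you wrote), and one reduces to showing $\PGL_2(\Z)$ is not $2$-generated by torsion via the central extension $1\to\langle-\Id\rangle\to\GL_2(\Z)\to\PGL_2(\Z)\to1$ together with Du's theorem that $\GL_2(\Z)$ is not $2$-generated by torsion.
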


\subsection{Acknowledgements}

I would like to express my sincerest gratitute to Dr. Du Xiaoming for suggesting this problem to me, for his conversations at South China University of Technology, and for his advice and comments on an earlier draft of the paper. I would also like to express my gratitute to Dr. Hou Yong for giving me the opportunity to work with him and his group at CUHK(SZ).

\section{Preliminaries}

\subsection{Spherical Braid Group}

Given any surface $\Sigma$, the classical braid group can be generalized to the {\it braid group on $\Sigma$}, denoted $B_n(\Sigma) := \pi_1(\mathrm{Conf}_n(\Sigma))$, where $\mathrm{Conf}_n(\Sigma)$ is the space of unordered configurations of $n$ distinct points on $\Sigma$. In particular, we will be interested in the {\it spherical braid groups} $B_n(S^2)$. We have a surjective homomorphism $B_n\to B_n(S^2)$ with kernel generated by the central element $R_n:= \sigma_1\dots\sigma_{n-1}\sigma_{n-1}\dots\sigma_1$. Then $B_n(S^2)$ has the presentation given by generators $\tilde\sigma_1,\dots, \tilde\sigma_{n-1}$ and relations
\begin{itemize}[noitemsep]
\item $\tilde\sigma_i\tilde\sigma_j = \tilde\sigma_j\tilde\sigma_i$ for $|i-j| > 2$
\item $\tilde\sigma_i\tilde\sigma_j\tilde\sigma_i=\tilde\sigma_j\tilde\sigma_i\tilde\sigma_j$ for $|i-j|=1$
\item $R_n=1$.
\end{itemize}

We turn our attention to the relationship between $B_n(S^2)$ and $\Mod{}{\Sigma_{0,n}}$. We have the exact sequence
\begin{align}
\label{braidtomcgses}
0\to \langle\beta\rangle\to B_n(S^2)\xrightarrow{\psi}\Mod{}{\Sigma_{0,n}}\to 0
\end{align}
where $\beta=(\tilde\sigma_1\dots\tilde\sigma_{n-1})^n$ and $\langle\beta\rangle\cong\Z/2\Z$ (see \cite{FM}, Section 9.1.4 and 9.2).

Here, we let $\sigma_i = \psi(\tilde\sigma_i)$ for $1\le i\le n-1$. Since we are interested in elements of finite order, we record the following result:
\newpage
\begin{prop}
    \label{periodicspherebraid}
    The elements of $\Mod{}{\Sigma_{0,n}}$ of finite order are conjugate to a power of one of the following:
    \begin{figure}[!h]
    \centering
    \begin{tabular}{|c|c|c|}
    \hline
       Element & Factoring & Order\\
    \hline
        $\alpha_0$ & $\sigma_1\dots\sigma_{n-1}$ & $n$\\
    \hline
        $\alpha_1$ & $\sigma_1\dots\sigma_{n-2}$ & $n-1$\\
    \hline
        $\alpha_2$ & $\sigma_1\dots\sigma_{n-3}\sigma_{n-2}^2$ & $n-2$\\
    \hline
    \end{tabular}
    \end{figure}
\end{prop}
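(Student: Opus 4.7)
The plan is to combine Nielsen realization with the Brouwer--Kerékjártó theorem. Any finite-order element $f\in\Mod{}{\Sigma_{0,n}}$ can be lifted to a finite-order orientation-preserving homeomorphism $\phi$ of $S^2$ permuting the $n$ distinguished points; since every periodic orientation-preserving self-homeomorphism of $S^2$ is topologically conjugate to a rigid rotation, one may assume that $\phi$ is rotation by $2\pi/k$ about some axis, where $k=\ord(f)$. Under such a rotation, every puncture orbit either has size exactly $k$ or is one of the two polar fixed points, so letting $\varepsilon\in\{0,1,2\}$ denote the number of polar punctures, one has $k\mid n-\varepsilon$. This already yields the upper bounds $n$, $n-1$, $n-2$ on the possible orders.

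Next comes a rigidity step: two rotations on $S^2$ of the same order $k$ with the same number $\varepsilon$ of fixed punctures are conjugate in $\Mod{}{\Sigma_{0,n}}$. After placing the $\varepsilon$ fixed punctures at the poles, the free orbits sit on $(n-\varepsilon)/k$ latitudes with $k$ equally-spaced punctures each, and any orientation-preserving homeomorphism of $S^2$ that matches two such standard configurations descends via (\ref{braidtomcgses}) to a mapping class that conjugates the rotations. Hence every periodic mapping class is conjugate to a single standard rotation $\rho_{k,\varepsilon}$ determined by the pair $(k,\varepsilon)$.

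It remains to identify the three families $\{\rho_{k,0}\}$, $\{\rho_{k,1}\}$, $\{\rho_{k,2}\}$ with powers of $\alpha_0$, $\alpha_1$, $\alpha_2$ respectively. The element $\alpha_0=\sigma_1\cdots\sigma_{n-1}$ is classically the image of a half-twist cycle through $n$ equatorial punctures and realizes $\rho_{n,0}$; its $j$th power realizes $\rho_{n/\gcd(n,j),0}$, exhausting the $\varepsilon=0$ case. Placing one puncture at the north pole and the remaining $n-1$ on the equator gives $\alpha_1=\sigma_1\cdots\sigma_{n-2}$ as $\rho_{n-1,1}$, and powers exhaust $\varepsilon=1$. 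For $\alpha_2$, the permutation on the strands must fix both polar punctures (indexed $n-1$ and $n$), which forces one to replace the final half-twist $\sigma_{n-2}$ by the full twist $\sigma_{n-2}^2$, trivial as a permutation but geometrically necessary so that the two polar strands wind correctly around the rotation axis; the product $\sigma_1\cdots\sigma_{n-3}\sigma_{n-2}^2$ then cyclically permutes the $n-2$ equatorial punctures, fixes the two polar ones, and realizes $\rho_{n-2,2}$, and its powers cover the rest.

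The principal obstacle is the rigidity step of the second paragraph: one must show that rotations with matching $(k,\varepsilon)$ are conjugate inside $\Mod{}{\Sigma_{0,n}}$ rather than merely inside $\mathrm{Homeo}^+(S^2)$, which requires producing a puncture-permuting self-homeomorphism of $S^2$ interpolating between the two standard configurations. A secondary but non-trivial check is that $\alpha_2$ really does have order $n-2$ in $\Mod{}{\Sigma_{0,n}}$; this is cleanest using the sphere relation $R_n=1$ together with the braid relations to reduce $\alpha_2^{n-2}$ to the identity, rather than by a direct computation in $B_n$.
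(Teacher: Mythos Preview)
Your approach is valid and genuinely different from the paper's. The paper argues algebraically: it lifts a periodic $f\in\Mod{}{\Sigma_{0,n}}$ through the exact sequence~(\ref{braidtomcgses}) to a periodic element of $B_n(S^2)$, invokes Murasugi's classification of torsion in the spherical braid group (giving three conjugacy classes of maximal finite cyclic subgroups), and then checks that Murasugi's three elements descend to $\alpha_0,\alpha_1,\alpha_2$ up to conjugacy in $\Mod{}{\Sigma_{0,n}}$---the only nontrivial point being that $\sigma_1\cdots\sigma_{n-2}\sigma_{n-1}^2$ is conjugate to $\alpha_1$, which is handled by an explicit order-two rotation of the marked points. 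Your route via Nielsen realization and Brouwer--Ker\'ekj\'art\'o is more geometric and essentially self-contained, making the three families visible as rotations with $0$, $1$, or $2$ polar punctures; the paper's route is shorter only because the work is packaged into the Murasugi citation.

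One remark on your write-up: the appeal to~(\ref{braidtomcgses}) in the rigidity paragraph is misplaced, since you are not using the spherical braid group there at all, only the tautological passage from $\mathrm{Homeo}^+(S^2,P)$ to mapping classes. What you actually need is that an orientation-preserving homeomorphism of $S^2$ carrying one rotation-invariant $n$-point set to another and intertwining the two rotations directly furnishes the conjugating mapping class. A clean way to organize this is via the quotient: the pair $(k,\varepsilon)$ determines the quotient orbifold $S^2/(\Z/k)$ together with its cone points and marked points up to orientation-preserving homeomorphism, and any such homeomorphism lifts to the desired conjugator. With that correction the argument goes through, and your identification of $\alpha_2$ as the $\varepsilon=2$ rotation also shows directly that it has order $n-2$, so the braid-relation computation you flag as a secondary check is in fact unnecessary.
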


\begin{proof}
    Let $\tilde\sigma_i$ refer to the standard generators of $B_n(S^2)$. Let $f\in\Mod{}{\Sigma_{0,n}}$ such that $f^k=1$. There exists a lift $\tilde f\in B_n(S^2)$. Thus, $\tilde f^k$ is a power of $\beta\in B_n(S^2)$, from (\ref{braidtomcgses}), which has finite order and so $\tilde f$ is also periodic. From \cite{Mur1982}, $\tilde f$ must be conjugate to a power of one of
    \begin{itemize}[noitemsep]
    	\item $\tilde\sigma_1\dots\tilde\sigma_{n-1}$,
	\item $\tilde\sigma_1\dots\tilde\sigma_{n-2}\tilde\sigma_{n-1}^2$, or
	\item $\tilde\sigma_1\dots\tilde\sigma_{n-3}\tilde\sigma_{n-2}^2$.
    \end{itemize}
    Note that $(\sigma_1\dots\sigma_{n-2}\sigma_{n-1}^2)^{-1} = \sigma_{n-2}\dots\sigma_1$ is conjugate to $\sigma_1\dots\sigma_{n-2}$ in $\Mod{}{\Sigma_{0,n}}$. To see this, suppose $\Sigma_{0,n}$ is the unit sphere in $\R^3$ and arrange the marked points $p_1,\dots,p_n$ in order and uniformly along the equator of the sphere. Define $\phi:\Sigma_{0,n}\to\Sigma_{0,n}$ by rotating $\pi$ radians along the axis through $p_n$ and the center of $\Sigma_{0,n}$. Then, $$[\phi]\cdot \sigma_i\cdot [\phi]^{-1} = \sigma_{n-1-i}$$ for all $1\le i\le n-2$. Hence, $f$ is conjugate to a power of one of the elements in the table.
\end{proof}

We will also make use of the following relations, which hold in $\Mod{}{\Sigma_{0,n,0}}$:
\begin{align}
\label{rotationactstwist}
    \alpha_0\sigma_i\alpha_0^{-1} &= \sigma_{i+1}\text{ for }1\le i < n-1\\
    \alpha_1\sigma_i\alpha_1^{-1} &= \sigma_{i+1}\text{ for }1\le i < n-2\\
    \alpha_2\sigma_i\alpha_2^{-1} &= \sigma_{i+1}\text{ for }1\le i < n-3
\end{align}
In particular, $\Mod{}{\Sigma_{0,n,0}}$ is generated by $\sigma_1$ and $\alpha_0$.

\subsubsection{Birman-Hilden}

We introduce the Birman-Hilden exact sequence for $\Sigma_2$. For details, see \cite{BH1973} and \cite{FM}.

\begin{thm}[Birman-Hilden]
\label{bhthm}
Let $\iota\in\Mod{}{\Sigma_2}$ denote the mapping class of an involution on $\Sigma_2$ with 6 fixed points. There is an exact sequence 
\begin{align}
        \label{BHseq}
    0\to \langle\iota\rangle\to \Mod{}{\Sigma_2}\to \Mod{}{\Sigma_{0,6}}\to 0.
    \end{align}
\end{thm}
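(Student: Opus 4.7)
The plan is to realize $\Sigma_2$ explicitly as a branched double cover $\pi : \Sigma_2 \to S^2$ with six branch points, whose deck transformation is the hyperelliptic involution $\iota$. The six fixed points of $\iota$ map bijectively to the six branch points, which we identify with the punctures of $\Sigma_{0,6}$. The proof relies on three ingredients: (a) the classical Birman--Hilden theorem that isotopies between $\iota$-equivariant (``symmetric'') homeomorphisms of $\Sigma_2$ can be chosen to be $\iota$-equivariant throughout; (b) the genus-$2$ specific fact that every self-homeomorphism of $\Sigma_2$ is isotopic to one commuting with $\iota$, which reflects the centrality of $\iota$ in $\Mod{}{\Sigma_2}$; and (c) a standard lifting argument for the branched cover.

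First I would construct the quotient homomorphism $\Psi : \Mod{}{\Sigma_2} \to \Mod{}{\Sigma_{0,6}}$. Given $[f]\in\Mod{}{\Sigma_2}$, use (b) to pick an $\iota$-equivariant representative $f$. Since $f$ commutes with $\iota$, it descends through $\pi$ to a homeomorphism $\bar f$ of $S^2$ permuting the six branch points, producing a class $[\bar f]\in\Mod{}{\Sigma_{0,6}}$. Ingredient (a) shows that $[\bar f]$ is independent of the choice of symmetric representative of $[f]$, so $\Psi$ is well-defined, and it is manifestly a homomorphism. Clearly $\iota\in\ker\Psi$, since $\iota$ is itself a deck transformation and descends to the identity.

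For surjectivity, I would observe that the branched cover $\pi$ is the unique connected double cover of $S^2$ branched over the six marked points; equivalently, it corresponds to a characteristic index-$2$ subgroup of $\pi_1(\Sigma_{0,6})$. Consequently every $[g]\in\Mod{}{\Sigma_{0,6}}$ lifts to an $\iota$-equivariant homeomorphism $\tilde g$ of $\Sigma_2$, and $\Psi([\tilde g])=[g]$. For the reverse containment in the kernel, suppose $\Psi([f])=1$ and represent $f$ symmetrically, so that $\bar f$ is isotopic to the identity on $S^2$ through homeomorphisms fixing the six marked points setwise. Applying (a) to lift this isotopy yields a symmetric isotopy from $f$ to a deck transformation of $\pi$, which must be either $\id$ or $\iota$. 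Hence $\ker\Psi = \langle\iota\rangle$, producing the claimed exact sequence.

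The main technical obstacle is ingredient (a), which requires a delicate equivariant isotopy-extension argument over the branched locus; this is the substance of \cite{BH1973}, with a modern exposition in \cite{FM}. Ingredient (b), while special to genus $2$, follows quickly from the fact that the hyperelliptic involution is unique in $\Mod{}{\Sigma_2}$: for any $[h]$, the class $[h\iota h^{-1}]$ is again a hyperelliptic involution, hence equals $[\iota]$, giving centrality. I would simply cite \cite{BH1973} and \cite{FM} for (a) and (b) rather than reprove these classical results.
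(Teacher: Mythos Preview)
Your sketch is a correct outline of the classical Birman--Hilden argument, and it lands on exactly the references the paper uses. Note, however, that the paper does not supply its own proof of this theorem at all: it is stated as a background result with the sentence ``For details, see \cite{BH1973} and \cite{FM},'' so there is nothing further to compare.
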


The following result will be useful in Section \ref{MainThm} to prove part of the main theorem. It extends the Birman-Hilden exact sequence to the extended mapping class group.

\begin{prop}
\label{BHThm}
Let $\iota\in\Mod{}{\Sigma_2}$ denote the mapping class of an involution on $\Sigma_2$ with 6 fixed points. There is an exact sequence $$0\to \langle\iota\rangle\to \EMod{}{\Sigma_2}\xrightarrow{\Psi} \EMod{}{\Sigma_{0,6}}\to 0.$$
\end{prop}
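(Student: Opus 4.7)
The plan is to define $\Psi$ by descent through the branched double cover $p\colon\Sigma_2\to\Sigma_2/\langle\iota\rangle\cong\Sigma_{0,6}$, and then verify exactness by reducing to the classical Birman--Hilden sequence (\ref{BHseq}) via the orientation character. Schematically, this produces a commuting diagram in which the top row is (\ref{BHseq}), the bottom row is the desired sequence, and the vertical inclusions $\Mod{}{\Sigma_2}\hookrightarrow\EMod{}{\Sigma_2}$ and $\Mod{}{\Sigma_{0,6}}\hookrightarrow\EMod{}{\Sigma_{0,6}}$ are of index two.

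The first key step is to establish that $\iota$ is central in all of $\EMod{}{\Sigma_2}$, not merely in $\Mod{}{\Sigma_2}$. The hyperelliptic involution is characterized up to isotopy by being an involution with six fixed points on $\Sigma_2$ (equivalently, by having quotient $\Sigma_{0,6}$), a topological invariant preserved by every self-homeomorphism, orientation-reversing or not. For $[\phi]\in\EMod{}{\Sigma_2}$, the conjugate $\phi\iota\phi^{-1}$ is again such an involution and is therefore isotopic to $\iota$, so $[\phi]\iota[\phi]^{-1}=\iota$ in $\EMod{}{\Sigma_2}$.

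Using centrality together with the equivariant isotopy technique of Birman--Hilden \cite{BH1973} — which assumes only that the deck action is orientation-preserving, and therefore applies regardless of the orientation of $\phi$ — every $[\phi]\in\EMod{}{\Sigma_2}$ admits a representative that literally commutes with $\iota$ and hence descends through $p$ to a homeomorphism $\bar\phi$ of $\Sigma_{0,6}$ preserving the branch set. Setting $\Psi([\phi]):=[\bar\phi]$ yields a well-defined homomorphism, because equivariantly isotopic homeomorphisms descend to isotopic homeomorphisms of the quotient.

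For exactness, observe that $\iota$ is orientation-preserving, so $\Psi$ is compatible with the orientation characters on both sides; in particular any $f\in\ker\Psi$ is orientation-preserving, lies in the kernel of the classical map $\psi$ of (\ref{BHseq}), and hence equals a power of $\iota$ by Theorem \ref{bhthm}, while conversely $\iota\in\ker\Psi$ as a deck transformation. Surjectivity is a standard branched-cover lifting argument: an orientation-reversing homeomorphism of $\Sigma_{0,6}$ permuting the six branch points pulls back through $p$ (which is determined by its branch locus) to an orientation-reversing self-homeomorphism of $\Sigma_2$ projecting to it. The main obstacle, and the only point requiring care beyond the classical case, is the equivariant isotopy step for orientation-reversing representatives; but since the deck group $\langle\iota\rangle$ acts by orientation-preserving homeomorphisms, the Birman--Hilden arguments go through unchanged.
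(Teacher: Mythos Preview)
Your argument is correct, but it takes a different route from the paper's. You argue abstractly: first that $\iota$ is central in the full extended group (via uniqueness of the hyperelliptic involution up to isotopy), and then that the Birman--Hilden equivariant-isotopy machinery applies verbatim to orientation-reversing classes because the deck group $\langle\iota\rangle$ is orientation-preserving. The paper instead makes a concrete reduction: it fixes one explicit orientation-reversing, fiber-preserving homeomorphism $T$ of $\Sigma_2$ and, for any orientation-reversing $\phi$, applies the classical (orientation-preserving) Birman--Hilden theorem to $[T]\phi$ to obtain a fiber-preserving representative $g$, so that $T^{-1}\circ g$ is a fiber-preserving representative of $\phi$; well-definedness is handled the same way, composing with $T$ to land in the orientation-preserving case before invoking (\ref{BHseq}).

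The trade-off: your approach is more structural and makes clear why nothing special is happening, but it leans on the assertion that ``the Birman--Hilden arguments go through unchanged'' for orientation-reversing maps, which a careful reader might want spelled out (in particular, the step from $[\phi]\iota[\phi]^{-1}=\iota$ in the mapping class group to an honest $\iota$-equivariant representative of $\phi$). The paper's approach sidesteps this entirely by reducing to the already-proved orientation-preserving statement via a single auxiliary element $T$, at the cost of being slightly less conceptual. Either way, the kernel and surjectivity arguments are the same: the kernel lies in $\Mod{}{\Sigma_2}$ by comparing orientation characters, and surjectivity follows because $T$ (or any orientation-reversing lift) hits the nontrivial coset of $\Mod{}{\Sigma_{0,6}}$.
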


\begin{proof}
    Let $\phi\in\EMod{}{\Sigma_2}$ be orientation-reversing. Since there exists an orientation-reversing homeomorphism $T:\Sigma_2\to\Sigma_2$ which is fiber-preserving, we may pick a representative $f:\Sigma_2\to\Sigma_2$ of $\phi$ which is fiber-preserving: there is a representative $g$ of $[T]\phi$ which is fiber preserving by \cite{BH1973} and so we may take $f = T^{-1}\circ g$. Letting $\pi:\Sigma_2\to\Sigma_{0,6}$ denote the branched covering map, we define $\bar f:\Sigma_{0,6}\to\Sigma_{0,6}$ by $\bar f= \pi\circ f\circ\pi^{-1}$.

    Suppose $f$ and $f'$ are both representatives of $\phi$, that is, $f$ and $f'$ are isotopic. Then $T\circ f$ and $T\circ f'$ are orientation-preserving, isotopic and fiber-preserving. By Theorem \ref{bhthm}, these maps are isotopic through fiber-preserving homemorphisms, say $H:\Sigma_2\times[0,1]\to\Sigma_2$ is such an isotopy. Hence, $H' = T^{-1}\circ H$ is a fiber-preserving isotopy between $f$ and $f'$. This isotopy then descends to an isotopy between $\bar f$ and $\bar f'$. Thus, we have a well-defined map $\Psi:\EMod{}{\Sigma_2}\to\EMod{}{\Sigma_{0,6}}$ given by $[f]\mapsto [\bar f]$. Since $\Psi|_{\Mod{}{\Sigma_2}}$ is exactly the Birman-Hilden homomorphism from (\ref{BHseq}) and the kernel of this map must lie in $\Mod{}{\Sigma_2}$, we see that $\ker(\Psi) = \langle\iota\rangle$.
\end{proof}

\section{Periodic Elements in $\EMS{n}$}

Let $n\ge 1$. For our standard model of $\Sigma_{0,n}$, we take the unit sphere embedded in $\R^3$ along with marked points $p_k$, $k=0,\dots,n-1$, given by $$p_k = \left(\cos\frac{2\pi k}{n},\; \sin\frac{2\pi k}{n},\; 0\right).$$ Let $T:\Sigma_{0,n}\to \Sigma_{0,n}$ denote the map given by $T(x,y,z) = (x,y,-z).$ We also let $T$ denote the isotopy class of this homeomorphism in $\EMod{}{\Sigma_{0,n}}$. Let $\sigma_i$, for $1\le i\le n-1$, denote the mapping class of the right Dehn twist about the arc connecting $p_i$ to $p_{i+1}$ along the equator. Note that $T\sigma_i = \sigma_i^{-1}T$ for each $1\le i\le n-1$.

We have the following presentation for $\EMS{n}$: generators are $\sigma_1,\dots,\sigma_{n-1}$, and $T$ with relations
\begin{itemize}[noitemsep]
\item $T^2 = (T\sigma_i)^2=1$, for $1\le i\le n-1$,
\item $\sigma_i\sigma_j=\sigma_j\sigma_i$, for $|i-j|\ge 2$,
\item $\sigma_i\sigma_j\sigma_i=\sigma_j\sigma_i\sigma_j$, for $|i-j|=1$,
\item $(\sigma_1\dots\sigma_{n-1})^n=1$,
\item $\sigma_1\dots\sigma_{n-1}\sigma_{n-1}\dots\sigma_1=1$
\end{itemize}

This is the presentation obtained from the isomorphism $\EMS{n}\cong\MS{n}\rtimes\Z/2\Z$ where the non-identity element $T$ of $\Z/2\Z$ acts on $\MS{n}$ by $\sigma_i\mapsto\sigma_i^{-1}$.

Recall that the orientation-preserving mapping classes of finite order are given by Proposition \ref{periodicspherebraid}. Using the presentation above, we have that
\begin{align*}
T\alpha_0T &= \sigma_{1}^{-1}\dots\sigma_{n-1}^{-1}\\
	&=  (\sigma_1\dots\sigma_{n-1}\sigma_{n-1}\dots\sigma_1)\cdot\sigma_{1}^{-1}\dots\sigma_{n-1}^{-1}\\
	&= \sigma_1\dots\sigma_{n-1}\\
	&= \alpha_0.
\end{align*}
Thus, $T\alpha_0$ is periodic with order $n$ if $n$ is even and order $2n$ if $n$ is odd. We also easily see that $$(T\s{1}\s{3}\dots\s{2k-1})^2 = 1,$$ for each $k=0,\dots,\lfloor n/2\rfloor$. Lastly,
\begin{align*}
    (T\sigma_{n-1}^{-1}) \alpha_2 (T\sigma_{n-1}^{-1}) &= T\sigma_{n-1}^{-1}\alpha_0\sigma_{n-1}^{-1}\sigma_{n-2}T\sigma_{n-1}^{-1}\\
    &=\sigma_{n-1}\alpha_0\sigma_{n-1}\sigma_{n-2}^{-1}\sigma_{n-1}^{-1}\\
    &=\alpha_0\sigma_{n-2}\sigma_{n-1}\sigma_{n-2}^{-1}\sigma_{n-1}^{-1}\\
    &=\alpha_0\sigma_{n-1}^{-1}\sigma_{n-2}\\
    &=\alpha_2.
\end{align*}
Thus, $T\sigma_{n-1}^{-1}$ and $\alpha_2$ commute and $T\s[-1]{n-1}\alpha_2$ has order $n-2$ if $n$ is even or $2(n-2)$ if $n$ is odd.

For general $n$, these do not exhaust all possibilities of orientation-reversion periodic elements, even up to conjugacy. For example, when $n=9$, there exists an orientation-reversing mapping class of order 6, acting by the permutation $(1\;2\;3\;4\;5\;6)(7\;8\;9)$ on the marked points, which is not covered by any of the above examples or their powers. However, it would be interesting to find a classification of all finite-order elements of $\EMod{}{\Sigma_{0,n}}$ in terms of the generators $\sigma_i$.

\section{Proof of Main Theorem}
This section is divided into 3 subsections, each dealing with a proof of particular case of Theorem \ref{mainthm}.

\subsection{$\EMS{4}$ cannot be generated by two periodic elements}

\begin{thm}
The group $\EMS{4}$ cannot be generated by two elements of finite order.
\end{thm}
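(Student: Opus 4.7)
The plan is to exhibit a quotient of $\EMS{4}$ that itself cannot be generated by two torsion elements, then pull back. The natural target is $\PGL_2(\Z)$, obtained from the hyperelliptic double cover $\pi:T^2\to\Sigma_{0,4}$ branched at the four marked points---the analog of Proposition \ref{BHThm} one dimension lower.

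First I would construct a surjection $\Psi:\EMS{4}\twoheadrightarrow\PGL_2(\Z)$. Arguing exactly as in the proof of Proposition \ref{BHThm}, every class in $\EMS{4}$ (including the orientation-reversing ones) admits a $\pi$-fiber-preserving representative on $T^2$ commuting with the covering involution $\iota$. The induced action on $H_1(T^2;\Z)\cong\Z^2$ gives a homomorphism to $\GL_2(\Z)$, and since $\iota$ acts on $H_1$ as $-\Id$, this descends to $\Psi:\EMS{4}\to\PGL_2(\Z)$. Surjectivity is clear: lifts of $\sigma_1,\sigma_2,\sigma_3$ act on $H_1(T^2)$ by transvections that generate $\mathrm{SL}_2(\Z)$, while $\Psi(T)$ has determinant $-1$.

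Next I would show $\PGL_2(\Z)$ admits no torsion generating pair. Two ingredients suffice: (i)~every finite-order element of $\PGL_2(\Z)$ has order $1$, $2$, or $3$---classical for $\mathrm{PSL}_2(\Z)\cong\Z/2*\Z/3$, and for an orientation-reversing class $[A]$ with $\det A=-1$, some power $A^{2n}$ equals $I$ so the eigenvalues of $A$ are roots of unity with product $-1$, necessarily of the form $\mu$ and $-\bar\mu$; the integrality of $\mathrm{tr}(A)=\mu-\bar\mu=2i\,\mathrm{Im}(\mu)$ then forces $\mu=\pm 1$ and $A^2=I$; (ii)~$\PGL_2(\Z)^{\mathrm{ab}}\cong(\Z/2)^2$. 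Now suppose $\PGL_2(\Z)=\langle f,g\rangle$ with both $f,g$ of finite order. If both have order $3$, then $\langle f,g\rangle\subseteq\mathrm{PSL}_2(\Z)$, a proper subgroup. If one has order $2$ and the other order $3$, then the order-$3$ generator is killed in the abelianization $(\Z/2)^2$ and a single involution image cannot span this rank-$2$ group. If both are involutions, then $\langle f,g\rangle$ is a quotient of $\Z/2*\Z/2\cong D_\infty$ and hence virtually cyclic, but $\PGL_2(\Z)$ contains the non-abelian free group $F_2\cong\mathrm{P}\Gamma(2)$, which no virtually cyclic group can contain.

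Combining these, any torsion generating pair of $\EMS{4}$ would descend under $\Psi$ to a torsion generating pair of $\PGL_2(\Z)$, contradicting the above. The main obstacle I anticipate is the careful setup of $\Psi$: verifying that orientation-reversing classes lift $\iota$-equivariantly to $T^2$ (following the template of Proposition \ref{BHThm}) and that the resulting map is genuinely surjective onto $\PGL_2(\Z)$. Once $\Psi$ is in place, the case analysis in $\PGL_2(\Z)$ is elementary.
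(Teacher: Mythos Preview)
Your argument is correct and, like the paper, reduces the problem to showing that $\PGL_2(\Z)$ cannot be generated by two torsion elements via a surjection $\EMS{4}\twoheadrightarrow\PGL_2(\Z)$. The difference lies in how that fact about $\PGL_2(\Z)$ is established. The paper lifts a hypothetical torsion generating pair $\bar A,\bar B$ to $A,B\in\GL_2(\Z)$, observes that $H=\langle A,B\rangle$ has index at most $2$ and hence contains the commutator subgroup, notes that $-\Id$ is a commutator so $H=\GL_2(\Z)$, and then invokes Du's theorem that $\GL_2(\Z)$ admits no torsion generating pair. Your route is instead a direct case analysis inside $\PGL_2(\Z)$: you classify torsion orders as $1,2,3$, then eliminate the three nontrivial cases using containment in $\mathrm{PSL}_2(\Z)$, the abelianization $(\Z/2)^2$, and the dihedral structure of $\Z/2*\Z/2$. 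Your approach is more self-contained (it does not rely on \cite{Du2019} as a black box) at the cost of a short case split; the paper's approach is a quick reduction once Du's result is available. Your explicit construction of $\Psi$ via the hyperelliptic cover $T^2\to\Sigma_{0,4}$ also parallels Proposition~\ref{BHThm} nicely, whereas the paper simply cites \cite{FM} for the surjection.
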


\begin{proof}
    Consider the short exact sequence 
    \begin{align}
    \label{glgexact}
    0\to \langle -\Id\rangle\to\GL_2(\Z)\xrightarrow{q}\PGL_2(\Z)\to 0.
    \end{align}
    If $\bar A\in\PGL_2(\Z)$ has $\bar{A}^k=\Id\in\PGL_2(\Z)$, then for any representative $A$ of $\bar A$, $A^k = \pm\Id$ so $A$ is periodic.
    Suppose that $\PGL_2(\Z)$ is generated by two elements $\bar A, \bar B$ of finite order. Then, if $A,B$ are representatives of $\bar A, \bar B$, then $A$ and $B$ generate a subgroup $H$ of $\GL_2(\Z)$. For any $g\in \GL_2(\Z)$, the only representatives of $q(g)$ are $g$ and $-g$, so either $g\in H$ or $-g\in H$. Hence, the index $[\GL_2(\Z):H] \le 2$. Thus, $\GL_2(\Z)/H$ is abelian and $[\GL_2(\Z),\GL_2(\Z)]\le H$. Note that $-\Id = [x,y]$, where
    $$
   	 x = \left(\begin{array}{cc}0&1\\1&0\end{array}\right)\text{ and } y=\left(\begin{array}{cc}-1&0\\0&1\end{array}\right).
    $$ Thus, $-\Id\in H$. But then $H=-H$ and so $[\GL_2(\Z):H] = 1$ which contradicts the result from \cite{Du2019}. Therefore, $\PGL_2(\Z)$ cannot be generated by two elements of finite order. Since we have a surjection $\EMS{4}\to\PGL_2(\Z)$, see Section 2.2.5 of \cite{FM}, the group $\EMS{4}$ cannot be generated by two finite order elements.
\end{proof}

Note that $\EMod{}{\Sigma_{0,4}}$ can be generated by the three periodic elements $T$, $T\sigma_1$, and $\alpha_0$.

\subsection{Periodic generation of $\EMS{n}$, for $n\not=4$}
\label{SectionMainTheoremProof}

We begin with a simple observation:
\begin{prop}
If $n$ is odd, then $\EMS{n}$ is generated by $T\sigma_1$ and $T\alpha_0$.
\end{prop}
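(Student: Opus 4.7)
The plan is to show that the subgroup $H = \langle T\sigma_1, T\alpha_0\rangle$ contains $T$, $\sigma_1$, and $\alpha_0$ individually, since the paper has already noted that $\MS{n}$ is generated by $\sigma_1$ and $\alpha_0$, and $\EMS{n} \cong \MS{n}\rtimes\langle T\rangle$.

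The key preliminary observation is that $T$ and $\alpha_0$ commute: the relation $T\alpha_0 T = \alpha_0$ computed earlier in the paper, combined with $T^2 = 1$, gives $T\alpha_0 = \alpha_0 T$. Since $\alpha_0$ has order $n$ and $T$ has order $2$, in the abelian subgroup generated by $T$ and $\alpha_0$ we may compute
\begin{equation*}
(T\alpha_0)^n = T^n \alpha_0^n = T \cdot 1 = T,
\end{equation*}
using crucially that $n$ is odd. So $T \in H$. (As a sanity check this is consistent with the earlier observation that $T\alpha_0$ has order $2n$ when $n$ is odd.)

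Once $T \in H$, I would immediately extract $\alpha_0 = T\cdot(T\alpha_0) \in H$ and $\sigma_1 = T\cdot(T\sigma_1) \in H$. Then by the conjugation relation $\alpha_0\sigma_i\alpha_0^{-1} = \sigma_{i+1}$ (for $1\le i<n-1$) already recorded in the excerpt, I get $\sigma_2,\ldots,\sigma_{n-1}\in H$ inductively. At this point $H$ contains all the standard generators $T,\sigma_1,\ldots,\sigma_{n-1}$ of $\EMS{n}$, so $H=\EMS{n}$.

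There isn't really a hard step here, which is why the proposition is called a ``simple observation.'' The only thing one has to notice is that the parity of $n$ is exactly what lets $(T\alpha_0)^n$ collapse to $T$ rather than to $1$; if $n$ were even, the same computation would give $(T\alpha_0)^n = 1$ and $T$ would have no reason to lie in $H$. Everything else is bookkeeping with the known commutation and conjugation relations.
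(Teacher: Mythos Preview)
Your proof is correct and follows essentially the same approach as the paper: compute $(T\alpha_0)^n = T^n\alpha_0^n = T$ using the commutation of $T$ and $\alpha_0$ together with the oddness of $n$, then extract $\sigma_1$ and $\alpha_0$ and conclude. The only cosmetic difference is that the paper invokes the earlier remark that $\sigma_1$ and $\alpha_0$ generate $\MS{n}$ directly rather than spelling out the conjugation $\alpha_0\sigma_i\alpha_0^{-1}=\sigma_{i+1}$, but this is the same content.
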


\begin{proof}
Let $H:=\langle T\sigma_1,T\alpha_0\rangle$. We have that $$(T\alpha_0)^n = T^n\alpha_0^n = T.$$ Therefore, $T\in H$ and so $\sigma_1,\alpha_0\in H$. Since $\sigma_1$ and $\alpha_0$ generate $\MS{n}$, we have $\MS{n}\le H$, but since $T\in H\setminus\MS{n}$, we must have that $H=\EMS{n}$.
\end{proof}

This proposition shows that for odd $n$, the theorem is immediate since $T\sigma_1$ has order $2$ and $T\alpha_0$ has order $2n$. We now turn to the more difficult case.

\begin{thm}\label{MainThm}
For all even $n\ge 6$, $\EMS{n}$ is generated by $a = \s{n-3}T\alpha_0\s[-1]{n-3}$ and $b = T\s[-1]{n-1}\alpha_2$.
\end{thm}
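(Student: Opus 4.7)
The plan is to set $H = \langle a, b\rangle$ and prove $H = \EMS{n}$ in two stages. Since $a$ is orientation-reversing, $H$ already contains a representative of the nontrivial coset of $\MS{n}$ in $\EMS{n}$, so it suffices to show the containment $\MS{n}\le H$. By the discussion at the end of Section 2.1, $\MS{n}$ is generated by $\alpha_0$ together with any single $\sigma_j$, so the concrete target is to exhibit these two elements as words in $a$ and $b$.

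I would begin by recording what the squares of $a$ and $b$ look like. Using the identity $T\alpha_0 = \alpha_0 T$ derived in Section 3, one obtains
\[
a^2 = \sigma_{n-3}(T\alpha_0)^2\sigma_{n-3}^{-1} = \sigma_{n-3}\alpha_0^2\sigma_{n-3}^{-1},
\]
and since $T\sigma_{n-1}^{-1}$ is an involution which commutes with $\alpha_2$ (both facts from Section 3), one has $b^2 = \alpha_2^2$. Both elements lie in $H\cap\MS{n}$. Applying $\alpha_0^2\sigma_{n-3}\alpha_0^{-2} = \sigma_{n-1}$ together with the braid commutation $\sigma_{n-3}\sigma_{n-1}=\sigma_{n-1}\sigma_{n-3}$, $a^2$ rewrites as $\sigma_{n-3}\sigma_{n-1}^{-1}\alpha_0^2$, exhibiting it as the pure rotation $\alpha_0^2$ up to the commuting twist pair $\sigma_{n-3}\sigma_{n-1}^{-1}$.

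Next I would analyze the short mixed word $ab$. Pushing all copies of $T$ together via $T\alpha_0 T = \alpha_0$ and $T\sigma_i T = \sigma_i^{-1}$, and again using $\sigma_{n-3}\sigma_{n-1}=\sigma_{n-1}\sigma_{n-3}$, this product simplifies to
\[
ab = \sigma_{n-3}\alpha_0\sigma_{n-3}\sigma_{n-1}^{-1}\alpha_2 = \sigma_{n-3}\sigma_{n-2}\alpha_0\sigma_{n-1}^{-1}\alpha_2,
\]
an orientation-preserving element of $H$. Combining this with $a^2,b^2$ and their conjugates, and using the factorization $\alpha_0 = \alpha_2\sigma_{n-2}^{-1}\sigma_{n-1}$, the plan is to strip off extraneous Dehn twists by forming suitable products and commutators (such as $(ab)b^{-2}$, $a^2(ab)^{-1}$, $a(ba)a^{-1}$, and so on), isolating either $\alpha_0^{\pm 1}$ or a single $\sigma_j^{\pm 1}$ as a word in $a,b$. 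The braid relations and the sphere relation $R_n = 1$ are the essential tools at this step.

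The main obstacle is precisely this extraction: in the extended group the braid relations, the sphere relation, and the conjugation action $\sigma_i\mapsto \sigma_i^{-1}$ by $T$ interact in a tangled way, and the correct word in $a,b$ whose reduction collapses to a generator of $\MS{n}$ must be identified by careful bookkeeping. Once $\alpha_0\in H$ and some $\sigma_j\in H$ are obtained, iterated conjugation by $\alpha_0^{\pm 1}$, via $\alpha_0\sigma_i\alpha_0^{-1}=\sigma_{i+1}$ for $1\le i<n-1$, produces every $\sigma_i$ for $1\le i\le n-1$. Hence $\MS{n}\le H$, and together with the orientation-reversing element $a\in H$ we conclude $H = \EMS{n}$.
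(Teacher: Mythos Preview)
Your overall strategy and your preliminary computations are correct and in fact coincide with the paper's: the reductions of $a^2$, $b^2$, and $ab$ that you write down are exactly the starting points the paper uses. The problem is that what you call ``the main obstacle'' is the entire content of the proof, and you have not carried it out. The suggested candidates $(ab)b^{-2}$, $a^2(ab)^{-1}$, $a(ba)a^{-1}$ do not, by themselves, collapse to $\alpha_0^{\pm1}$ or to a single $\sigma_j^{\pm1}$; a considerably more elaborate sequence of words is needed, and the paper never actually isolates $\alpha_0$ or a lone $\sigma_j$ directly in the way you anticipate.

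Concretely, the paper proceeds through two substantial intermediate lemmas. First, starting from $x_0 = b^{-2}ab$ and iterating through $x_1 = x_0 a x_0^{-1}$, $x_2 = x_1 a^{-1}$, $x_3 = x_2 b^{-1}$, $x_4 = x_3 a$, one eventually obtains $\gamma_{n-5} := \sigma_{n-5}\sigma_{n-3}\sigma_{n-1}^{-1}\in H$; conjugating by powers of $a^2$ then gives every $\gamma_k$ with $k$ odd, and their product is $y=\sigma_1\sigma_3\cdots\sigma_{n-1}\in H$. Second, a separate computation shows $(ab)^{(n-2)/2} = z := \sigma_1\sigma_3\cdots\sigma_{n-5}\sigma_{n-2}\in H$. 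Only by combining $y$, $z$, and $\gamma_{n-3}$ does one extract $w=\sigma_{n-2}^{-1}\sigma_1$, and a further conjugation by $a^{-1}b$ yields $\sigma_{n-1}^{-1}\sigma_1$, from which the finish follows. None of this is visible from the short words you list, and the bookkeeping you allude to is genuinely long; without it, the proposal is a correct outline but not a proof.
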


To prove this, we proceed in a sequence of steps. Let $H = \langle a, b\rangle$. We will make use of the following relations. For $k\not= n-6, n-4, n-2$,
\begin{align*}
    a^2\s{k}a^{-2} &= \s{n-3}\alpha_0^2\s[-1]{n-3}\cdot \s{k}\cdot \s{n-3}\alpha_0^{-2}\s[-1]{n-3}\\
    &= \s{n-3}\alpha_0^2\cdot \s{k}\cdot \alpha_0^{-2}\s[-1]{n-3}\\
    &= \s{n-3}\s{k+2}\s[-1]{n-3}\\
    &= \s{k+2}.
\end{align*}

\begin{lem}
We have $$y:=\prod_{\substack{k=1\\ k\text{ odd}}}^{n-1}\s{k}=\s{1}\s{3}\dots\s{n-1}\in H.$$
\end{lem}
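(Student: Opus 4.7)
The plan is a direct, though technical, computation using the relations in the presentation of $\EMS{n}$ together with the earlier computations in the section. The main identities I would exploit are: (i) $T$ and $\alpha_0$ commute, since $T\alpha_0 T = \alpha_0$ and $T^2=1$; (ii) $T\sigma_i^{-1} = \sigma_i T$, a consequence of $T\sigma_i T = \sigma_i^{-1}$; and (iii) the algebraic identity $\alpha_0\sigma_{n-1}^{-1} = \sigma_1\sigma_2\cdots\sigma_{n-2} = \alpha_1$, which also yields $\alpha_2 = \alpha_1\sigma_{n-2}$.

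I would first rewrite the generators so that the $T$'s sit at the right end: $a = \sigma_{n-3}\alpha_0\sigma_{n-3}T$ and $b = \sigma_{n-1}T\alpha_2$. Then, in computing $ab$, the two adjacent $T$'s cancel via $T\alpha_0 T = \alpha_0$, and after applying identity (iii) together with $[\sigma_{n-3},\sigma_{n-1}]=1$ the product simplifies to $ab = \sigma_{n-3}\sigma_{n-2}\alpha_1^{2}\sigma_{n-2}$, an explicit orientation-preserving element of $H$. A parallel calculation yields $b^2 = \alpha_2^{2} = \alpha_1^{2}\sigma_{n-3}\sigma_{n-2}$. Comparing these two elements is the crucial observation: $b^{-2}\cdot ab$ can be simplified by pushing $\alpha_1^{\pm 2}$ through using $\alpha_1\sigma_k\alpha_1^{-1} = \sigma_{k+1}$ (valid for $1\le k\le n-3$), producing a short word in $H$ involving only the $\sigma_j$'s (with $j$ near the boundary $\{n-5,n-4,n-3,n-2\}$) and no $\alpha$ factors.

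The final step propagates this short word via the shift relation $a^2\sigma_k a^{-2} = \sigma_{k+2}$ from the preamble. For even $n$ the forbidden indices $\{n-6,n-4,n-2\}$ are all even, so the shift is valid for every odd $k$ with $1\le k\le n-3$. Hence, once a single odd-indexed $\sigma_k$ has been isolated as an element of $H$, iteration of the shift yields $\sigma_1,\sigma_3,\ldots,\sigma_{n-1}\in H$; since these pairwise commute (their supporting arcs are disjoint), their product equals $y$. The main obstacle I expect is the braid-relation bookkeeping required to extract a clean single $\sigma_k$ from the word $b^{-2}\cdot ab$, which mixes positive and negative powers of consecutive braid generators and so requires careful use of identities such as $\sigma_i^{-1}\sigma_{i+1}^{-1}\sigma_i = \sigma_{i+1}\sigma_i^{-1}\sigma_{i+1}^{-1}$ together with the commutations $[\sigma_i,\sigma_j]=1$ for $|i-j|\ge 2$.
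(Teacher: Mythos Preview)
Your preliminary computations are correct and match the paper: with $a=\sigma_{n-3}\alpha_0\sigma_{n-3}T$ and $b=\sigma_{n-1}T\alpha_2$ one indeed gets $ab=\sigma_{n-3}\sigma_{n-2}\alpha_1^{2}\sigma_{n-2}$, $b^{2}=\alpha_1^{2}\sigma_{n-3}\sigma_{n-2}$, and hence
\[
b^{-2}ab=\sigma_{n-2}^{-1}\sigma_{n-3}^{-1}\sigma_{n-5}\sigma_{n-4}\sigma_{n-2},
\]
which is exactly the element the paper calls $x_0$. The gap is in the next step. You propose to ``extract a clean single $\sigma_k$'' from this word using braid relations and commutations, but that is impossible: $x_0$ is a genuine five--letter braid whose image in the symmetric group is not a transposition, so no amount of rewriting will collapse it to one $\sigma_k$. (Indeed, if a single odd $\sigma_k$ were available in $H$ at this point, the shift $a^{\pm 2}$ would give all odd $\sigma_k$'s and you would essentially have the main theorem already, bypassing both lemmas.) The ``obstacle'' you flag is therefore not a bookkeeping nuisance but the actual content of the lemma.

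What the paper does instead is to continue manipulating: from $x_0$ it forms $x_1=x_0ax_0^{-1}$, $x_2=x_1a^{-1}$, $x_3=x_2b^{-1}$, $x_4=x_3a$, and after substantial braid simplification obtains the three--term element
\[
x_4=\sigma_{n-5}\,\sigma_{n-3}\,\sigma_{n-1}^{-1}=\gamma_{n-5},\qquad \gamma_k:=\sigma_k\sigma_{k+2}\sigma_{k+4}^{-1}.
\]
Only this particular shape works: conjugation by $a^{2}$ shifts $\gamma_k\mapsto\gamma_{k+2}$ (the forbidden indices $n-6,n-4,n-2$ are all even), and since the odd--indexed $\sigma$'s pairwise commute, the telescoping product
\[
\gamma_1\gamma_3\cdots\gamma_{n-1}=\prod_{k\ \mathrm{odd}}\sigma_k^{\,1+1-1}=y
\]
lands in $H$. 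So your shift idea is used, but it is applied to the $\gamma_k$'s rather than to individual $\sigma_k$'s; the missing ingredient in your plan is precisely the discovery of $\gamma_{n-5}\in H$ via the further products with $a^{\pm1},b^{-1}$.
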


\begin{proof}
We first compute the following:
\begin{align*}
x_0	&= b^{-2}ab\\
	&=\left(\alpha_2^{-2}\right)\cdot \left(\s{n-3}T\alpha_0\s[-1]{n-3}\right)\cdot \left(T\s[-1]{n-1}\alpha_2\right)\\
	&=\left(\s[-1]{n-2}\s{n-1}\alpha_0^{-1}\right)\left(\s[-1]{n-2}\s{n-1}\alpha_0^{-1}\right)\cdot \s{n-3}\boxed{T\alpha_0\s[-1]{n-3}T}\s[-1]{n-1}\alpha_0\s[-1]{n-1}\s{n-2}\\
	&=\s[-1]{n-2}\s{n-1}\alpha_0^{-1}\s[-1]{n-2}\s{n-1}\alpha_0^{-1}\cdot \s{n-3}\boxed{\alpha_0\s{n-3}}\s[-1]{n-1}\alpha_0\s[-1]{n-1}\s{n-2}\\
	&=\s[-1]{n-2}\cancel{\s{n-1}}\s[-1]{n-3}\cancel{\s{n-2}} \s{n-5}\s{n-4}\cancel{\s[-1]{n-2}}\cancel{\s[-1]{n-1}}\s{n-2}\\
	&=\s[-1]{n-2}\s[-1]{n-3} \s{n-5}\s{n-4}\s{n-2}\\
 \\
x_1 	&= x_0ax_0^{-1}\\
	&=\left(\s[-1]{n-2}\s[-1]{n-3} \s{n-5}\s{n-4}\s{n-2}\right)\cdot \s{n-3}T\alpha_0\s[-1]{n-3} \cdot\left(\s[-1]{n-2}\s[-1]{n-4}\s[-1]{n-5}\s{n-3}\s{n-2}\right)\\
	&=\s[-1]{n-2}\s[-1]{n-3} \s{n-5}\s{n-4}\s{n-2}\cdot \s{n-3}T\alpha_0\s[-1]{n-3} \cdot\s[-1]{n-2}\s[-1]{n-4}\s[-1]{n-5}\s{n-3}\s{n-2}\\
	&=\s[-1]{n-2}\s[-1]{n-3} \s{n-5}\s{n-4}\s{n-2}\s{n-3}\alpha_0\s{n-3}\s{n-2}\s{n-4}\s{n-5}\s[-1]{n-3}\s[-1]{n-2}T\\
	&=\s[-1]{n-2}\s[-1]{n-3} \s{n-5}\s{n-4}\s{n-2}\s{n-3}\s{n-2}\s{n-1}\s{n-3}\s{n-4}\s[-1]{n-2}\s[-1]{n-1}T\alpha_0\\
 \\
x_2 	&= x_1a^{-1}\\
	&= \s[-1]{n-2}\s[-1]{n-3} \s{n-5}\s{n-4}\s{n-2}\s{n-3}\s{n-2}\s{n-1}\s{n-3}\s{n-4}\s[-1]{n-2}\s[-1]{n-1}\boxed{T\alpha_0\cdot \s{n-3}T\alpha_0^{-1}}\s[-1]{n-3}\\
	&= \s[-1]{n-2}\s[-1]{n-3} \s{n-5}\s{n-4}\s{n-2}\s{n-3}\s{n-2}\s{n-1}\s{n-3}\s{n-4}\s[-1]{n-2}\s[-1]{n-1}\boxed{\s[-1]{n-2}}\s[-1]{n-3}\\
	&= \s[-1]{n-2}\s[-1]{n-3} \s{n-5}\s{n-4}\s{n-2}\s{n-3}\s{n-2}\s{n-1}\s{n-3}\s{n-4}\boxed{\s[-1]{n-2}\s[-1]{n-1}\s[-1]{n-2}}\s[-1]{n-3}\\
	&= \s[-1]{n-2}\s[-1]{n-3} \s{n-5}\s{n-4}\s{n-2}\s{n-3}\s{n-2}\cancel{\s{n-1}}\s{n-3}\s{n-4}\boxed{\cancel{\s[-1]{n-1}}\s[-1]{n-2}\s[-1]{n-1}}\s[-1]{n-3}\\
	&= \s[-1]{n-2}\s[-1]{n-3} \s{n-5}\s{n-4}\s{n-2}\boxed{\s{n-3}\s{n-2}\s{n-3}}\s{n-4}\s[-1]{n-2}\s[-1]{n-1}\s[-1]{n-3}\\
	&= \s[-1]{n-2}\s[-1]{n-3} \s{n-5}\s{n-4}\s{n-2}\boxed{\s{n-2}\s{n-3}\cancel{\s{n-2}}}\s{n-4}\cancel{\s[-1]{n-2}}\s[-1]{n-1}\s[-1]{n-3}\\
	&=\boxed{ \s[-1]{n-2}\s[-1]{n-3} \s{n-5}}\boxed{\s{n-4}\s{n-2}\s{n-2}}\s{n-3}\s{n-4}\s[-1]{n-1}\s[-1]{n-3}\\
	&=\boxed{\s{n-5} \s[-1]{n-2}\s[-1]{n-3}}\boxed{\s{n-2}\s{n-2}\s{n-4}}\s{n-3}\s{n-4}\s[-1]{n-1}\s[-1]{n-3}\\
	&=\s{n-5} \s[-1]{n-2}\s[-1]{n-3}\s{n-2}\s{n-2}\boxed{\s{n-4}\s{n-3}\s{n-4}}\s[-1]{n-1}\s[-1]{n-3}\\
	&=\s{n-5} \s[-1]{n-2}\s[-1]{n-3}\s{n-2}\s{n-2}\boxed{\s{n-3}\s{n-4}\cancel{\s{n-3}}}\s[-1]{n-1}\cancel{\s[-1]{n-3}}\\
	&=\s{n-5} \s[-1]{n-2}\s[-1]{n-3}\s{n-2}\s{n-2}\s{n-3}\s{n-4}\s[-1]{n-1}\\
 \\
x_3 &= x_2b^{-1}\\
	&= \s{n-5} \s[-1]{n-2}\s[-1]{n-3}\s{n-2}\s{n-2}\s{n-3}\s{n-4}\s[-1]{n-1}\cdot  \s[-1]{n-2}\s{n-1}\boxed{\alpha_0^{-1}\s{n-1}}T\\
	&= \s{n-5} \s[-1]{n-2}\s[-1]{n-3}\s{n-2}\s{n-2}\s{n-3}\s{n-4}\s[-1]{n-1}\s[-1]{n-2}\s{n-1}\boxed{\s{n-2}\alpha_0^{-1}}T\\
	&= \s{n-5} \s[-1]{n-2}\s[-1]{n-3}\s{n-2}\s{n-2}\s{n-3}\s{n-4}\s[-1]{n-1}\boxed{\s[-1]{n-2}\s{n-1}\s{n-2}}\alpha_0^{-1}T\\
	&= \s{n-5} \s[-1]{n-2}\s[-1]{n-3}\s{n-2}\s{n-2}\s{n-3}\s{n-4}\cancel{\s[-1]{n-1}}\boxed{\cancel{\s{n-1}}\s{n-2}\s[-1]{n-1}}\alpha_0^{-1}T\\
	&= \s{n-5} \s[-1]{n-2}\s[-1]{n-3}\s{n-2}\s{n-2}\s{n-3}\boxed{\s{n-4}\s{n-2}}\s[-1]{n-1}\alpha_0^{-1}T\\
	&= \s{n-5} \s[-1]{n-2}\s[-1]{n-3}\s{n-2}\s{n-2}\s{n-3}\boxed{\s{n-2}\s{n-4}}\s[-1]{n-1}\alpha_0^{-1}T\\
	&= \s{n-5} \s[-1]{n-2}\s[-1]{n-3}\s{n-2}\boxed{\s{n-2}\s{n-3}\s{n-2}}\s{n-4}\s[-1]{n-1}\alpha_0^{-1}T\\
	&= \s{n-5} \s[-1]{n-2}\s[-1]{n-3}\s{n-2}\boxed{\s{n-3}\s{n-2}\s{n-3}}\s{n-4}\s[-1]{n-1}\alpha_0^{-1}T\\
	&= \s{n-5} \s[-1]{n-2}\s[-1]{n-3}\boxed{\s{n-2}\s{n-3}\s{n-2}}\s{n-3}\s{n-4}\s[-1]{n-1}\alpha_0^{-1}T\\
	&= \s{n-5} \cancel{\s[-1]{n-2}}\cancel{\s[-1]{n-3}}\boxed{\cancel{\s{n-3}}\cancel{\s{n-2}}\s{n-3}}\s{n-3}\s{n-4}\s[-1]{n-1}\alpha_0^{-1}T\\
	&= \s{n-5} \s{n-3}\s{n-3}\s{n-4}\s[-1]{n-1}\alpha_0^{-1}T\\
 \\
x_4 &= x_3a\\
	&=\s{n-5} \s{n-3}\s{n-3}\s{n-4}\s[-1]{n-1}\boxed{\alpha_0^{-1}T\cdot \s{n-3}T\alpha_0}\s[-1]{n-3}\\
	&=\s{n-5} \s{n-3}\cancel{\s{n-3}}\cancel{\s{n-4}}\s[-1]{n-1}\boxed{\cancel{\s[-1]{n-4}}}\cancel{\s[-1]{n-3}}\\
	&=\s{n-5} \s{n-3}\s[-1]{n-1}\\
\end{align*}

\noindent Define $\gamma_k := \s{k}\s{k+2}\s[-1]{k+4}$ where subscripts are taken modulo $n$. Also,
\begin{align*}
    a^{2k}\gamma_1a^{-2k} &= a^{2k}\s{1}\s{3}\s[-1]{5}a^{-2k}\\
        &= a^{2k}\s{2k+1}\s{2k+3}\s[-1]{2k+5}a^{-2k}\\
        &= \gamma_{2k+1}
\end{align*} for all odd $k$. The above computations show that $\gamma_{n-5} \in H$. Hence, $\gamma_k\in H$ for all odd $k$. Thus,
\begin{align*}
    y   &= \gamma_1\gamma_3\dots\gamma_{n-1}\\
        &= \s{1}\s{3}\dots\s{n-3}\s{n-1}\\
        &\in H.
\end{align*}

One can see this by noting that each pair of the $\sigma_i$'s which appear in $y$ commute and hence, the right-hand side can be obtained by adding exponents for each $\sigma_i$ which appears.
\end{proof}

\begin{lem}
We have $$z := \s{n-2}\prod_{\substack{k=1\\ k\text{ odd}}}^{n-5} \s{k} = \s{1}\s{3}\dots\s{n-5}\s{n-2}\in H.$$
\end{lem}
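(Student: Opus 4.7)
The plan is to mimic the strategy of the preceding lemma. First, I reduce the problem to producing a single three-letter ``tail'' element. Set $w := \s{1}\s{3}\cdots\s{n-5}$. Each $\s{i}$ appearing in $w$ has $i \le n-5$, so it commutes with each of $\s{n-3},\s{n-2},\s{n-1}$ (as $|i-j|\ge 2$). Factoring $y = w\,\s{n-3}\s{n-1}$ and $z = w\,\s{n-2}$, the conjugating $w$'s cancel and we get
\begin{equation*}
zy^{-1} \;=\; w\,\s{n-2}\,\s[-1]{n-1}\,\s[-1]{n-3}\,w^{-1} \;=\; \s{n-2}\,\s[-1]{n-1}\,\s[-1]{n-3} \;=:\; \xi.
\end{equation*}
Since $y \in H$ by the previous lemma, the task reduces to showing $\xi \in H$.

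To produce $\xi$, I plan to run a computation in $a$ and $b$ analogous to the chain $x_0,\ldots,x_4$ that yielded $\gamma_{n-5}$. A natural starting word already carries the desired letters: $b = T\s[-1]{n-1}\alpha_2$ has an explicit factor of $\s[-1]{n-1}$ and, through $\alpha_2 = \s{1}\cdots\s{n-3}\s[2]{n-2}$, supplies copies of $\s{n-2}$. A candidate seed combining $b$ with one or two conjugating factors of $a$, simplified using the defining relations of $\EMS{n}$ (the braid relations on the $\s{i}$, the identities $T\s{i}T = \s[-1]{i}$ and $T\alpha_0T = \alpha_0$, $\alpha_0\s{i}\alpha_0^{-1}=\s{i+1}$ for $i<n-1$, and the spherical relation $(\s{1}\cdots\s{n-1})^n = 1$), should telescope down to a three-letter tail word; a final $a^{2k}$-conjugation -- valid provided the indices avoid the exceptional values $n-6,n-4,n-2$ from the previous lemma -- can then align the indices to give $\xi$ exactly (up to reordering by commuting factors).

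The hard part will be locating the correct seed word: the telescoping must annihilate all letters except three, with the surviving indices landing in $\{n-3,n-2,n-1\}$ with signs matching $\xi$. As in the preceding proof, this is ultimately a careful bookkeeping exercise in the braid and $T$-twisting relations, but tracking how factors of $\alpha_0^{\pm 1}$ interact with $\s{n-3}^{\pm 1}$ and $T$ requires precision. Once $\xi \in H$ is in hand, $z = \xi y$ puts $z \in H$ as well, and the lemma follows.
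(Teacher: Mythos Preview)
Your reduction $zy^{-1}=\xi:=\s{n-2}\s[-1]{n-1}\s[-1]{n-3}$ is correct, but note that it is merely a reformulation: since $y\in H$ already, asking for $\xi\in H$ is exactly asking for $z\in H$. The entire content of the lemma is producing a concrete word in $a$ and $b$ equal to $z$ (equivalently, to $\xi$), and this is precisely the step you have not carried out. Saying that some seed in $a,b$ ``should telescope down to a three-letter tail word'' is a hope, not an argument; the previous lemma's $x_0,\dots,x_4$ chain was a delicate, bespoke computation, and there is no general mechanism that guarantees a parallel chain exists for a different target.

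There is also a concrete obstruction to your intended endgame. You plan to finish with an $a^{2k}$-conjugation to align indices, citing the relation $a^2\s{j}a^{-2}=\s{j+2}$, which holds only for $j\notin\{n-6,n-4,n-2\}$. But $\xi$ contains $\s{n-2}$, and to land on $\s{n-2}$ by this shift you would need the input index $j=n-4$, which is exactly one of the forbidden values. So the shift cannot deliver the $\s{n-2}$ factor; the indices of $\xi$ are not reachable from a ``generic'' three-letter tail by $a^{2k}$-conjugation.

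The paper's proof proceeds along an entirely different line: it computes $ab$ in closed form as $(\alpha_0\s[-1]{n-1})^{2}\Delta_{n-5}$, where $\Delta_k=\s{k}\s{k+1}\s{k+3}$, establishes a shift identity $(\alpha_0\s[-1]{n-1})^{2}\Delta_k=\Delta_{k+2}(\alpha_0\s[-1]{n-1})^{2}$, and then uses that $\alpha_0\s[-1]{n-1}=\alpha_1$ has order $n-1$ to collapse $(ab)^{n/2-1}$ to $\Delta_1\Delta_3\cdots\Delta_{n-5}$ up to a boundary correction, which simplifies to $z$. In other words, $z=(ab)^{n/2-1}$. This is a single explicit identity, not a search for a seed word; your proposal does not supply anything comparable.
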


\begin{proof}
We start with
\begin{align*}
ab 	&= \s{n-3}T\alpha_0\s[-1]{n-3}\cdot T\s[-1]{n-1}\alpha_0\s[-1]{n-1}\s{n-2}\\
	&= \s{n-3}\alpha_0\s{n-3}\s[-1]{n-1}\alpha_0\s[-1]{n-1}\s{n-2}\\
	&= \left(\alpha_0\s[-1]{n-1}\right)^2\s{n-5}\s{n-4}\s{n-2}
\end{align*}
Let $\Delta_k := \s{k}\s{k+1}\s{k+3}$ for $1\le k\le n-5$. Then,
$$
\left(\alpha_0\s[-1]{n-1}\right)^2\Delta_{k} = \Delta_{k+2}\left(\alpha_0\s[-1]{n-1}\right)^2
$$
for $1\le k\le n-7$ and 
\begin{align*}
ab  &= \left(\alpha_0\s[-1]{n-1}\right)^2\Delta_{n-5}\\
    &= \alpha_0\s[-1]{n-1}\alpha_0\s[-1]{n-1}\s{n-5}\s{n-4}\s{n-2}\\
    &=\alpha_0^2\s[-1]{n-2}\s[-1]{n-1}\s{n-5}\s{n-4}\s{n-2}\\
    &=\alpha_0^2\s{n-5}\s{n-4}\s[-1]{n-2}\s[-1]{n-1}\s{n-2}\\
    &=\s{n-3}\s{n-2}\alpha_0^2\s{n-1}\s[-1]{n-2}\s[-1]{n-1}\\
    &=\s{n-3}\s{n-2}\s{1}\left(\alpha_0\s[-1]{n-1}\right)^2.
\end{align*}
\begin{align*}
\Delta_1\Delta_3\Delta_5\dots\Delta_{n-5} &= \s{1}\s{2}\s{4}\cdot\s{3}\s{4}\s{6}\cdot\s{5}\s{6}\s{8}\dots\s{n-7}\s{n-6}\s{n-4}\cdot\s{n-5}\s{n-4}\s{n-2}\\
    &= \s{1}\s{2}\s{3}\cdot\s{4}\s{3}\s{5}\cdot\s{6}\s{5}\s{7}\dots\s{n-6}\s{n-7}\s{n-5}\cdot\s{n-4}\s{n-5}\s{n-2}\\
    &=\s{1}\s{2}\dots\s{n-4}\cdot\s{3}\s{5}\dots\s{n-5}\s{n-2}\\
    &=\alpha_0\s[-1]{n-1}\s[-1]{n-2}\s[-1]{n-3}\cdot\s{3}\s{5}\dots\s{n-5}\s{n-2}\\
    &=\alpha_0\s[-1]{n-1}\s[-1]{n-2}\s[-1]{n-3}\cdot\s[-1]{1}z.
\end{align*}
Therefore, 
\begin{align*}
(ab)^{\frac{n}{2}-1} 
    &= \bigg[\left(\alpha_0\s[-1]{n-1}\right)^2\Delta_{n-5}\bigg]\cdot\left(\alpha_0\s[-1]{n-1}\right)^2\Delta_{n-5}\dots\left(\alpha_0\s[-1]{n-1}\right)^2\Delta_{n-5}\\
    &= \bigg[\s{n-3}\s{n-2}\s{1}\left(\alpha_0\s[-1]{n-1}\right)^2\bigg]\cdot\left(\alpha_0\s[-1]{n-1}\right)^2\Delta_{n-5}\dots\left(\alpha_0\s[-1]{n-1}\right)^2\Delta_{n-5}\\
    &= \s{n-3}\s{n-2}\s{1}\left(\alpha_0\s[-1]{n-1}\right)^{n-2}\Delta_{1}\Delta_{3}\Delta_{5}\dots\Delta_{n-7}\Delta_{n-5}\\
    &= \s{n-3}\s{n-2}\s{1}\bigg[\left(\alpha_0\s[-1]{n-1}\right)^{n-2} \cdot\alpha_0\s[-1]{n-1}\bigg]\s[-1]{n-2}\s[-1]{n-3}\cdot\s[-1]{1}z\\
    &= \s{n-3}\s{n-2}\s{1}\s[-1]{n-2}\s[-1]{n-3}\cdot\s[-1]{1}z\\
    &= z,
\end{align*}
where we use the fact that $\alpha_0\s[-1]{n-1} = \alpha_1$ has order $n-1$.
\end{proof}

\begin{proof}[Proof of Theorem \ref{MainThm}]
We have
\begin{align*}
w  :=& z^{-1}y\cdot \gamma_{n-3}^{-1}\\
    =& \s[-1]{n-2}\s{n-3}\s{n-1}\cdot \s[-1]{n-3}\s[-1]{n-1}\s{1}\\
    =& \s[-1]{n-2}\s{1}\\
    \in& H.
\end{align*}

Since $$a^{-1}b = \s{n-3}\s{n-4}\s[-1]{n-2}\s[-1]{n-1}\s{n-2},$$ we have that $$c:=a^{-1}b\cdot w\cdot b^{-1}a = \s[-1]{n-1}\s{1}.$$ Thus, $T\alpha_0\in H$ and, conjugating $\sigma_{n-3}$ by $T\alpha_0$ gives $\sigma_i\in H$ for all $1\le i\le n-1$.
\end{proof}

\subsection{Periodic generation of $\G$}

\begin{thm}
The group $\EMod{}{\Sigma_2}$ is generated by two elements of finite order.
\end{thm}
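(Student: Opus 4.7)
The plan is to apply the Birman-Hilden exact sequence
\[
0 \to \langle\iota\rangle \to \EMod{}{\Sigma_2} \xrightarrow{\Psi} \EMS{6} \to 0
\]
from Proposition \ref{BHThm}, combined with the $n=6$ case of Theorem \ref{MainThm}. By that theorem, the elements $a = \sigma_3 T\alpha_0\sigma_3^{-1}$ and $b = T\sigma_5^{-1}\alpha_2$ generate $\EMS{6}$, and from the order computations in Section 3 they have orders $6$ and $4$ respectively.

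First I would choose lifts $\tilde a,\tilde b \in \EMod{}{\Sigma_2}$ of $a, b$ under $\Psi$. Because $\ker\Psi = \langle\iota\rangle$ has order $2$ and $\iota$ is central in $\EMod{}{\Sigma_2}$, any preimage of a finite-order element is itself of finite order; more specifically, $\tilde a^6 \in \langle\iota\rangle$ and $\tilde b^4 \in \langle\iota\rangle$, so $\tilde a$ has order $6$ or $12$ and $\tilde b$ has order $4$ or $8$. Setting $H = \langle \tilde a,\tilde b\rangle$, Theorem \ref{MainThm} gives $\Psi(H) = \EMS{6}$, hence $H\cdot\langle\iota\rangle = \EMod{}{\Sigma_2}$. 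In particular $H$ is either all of $\EMod{}{\Sigma_2}$ or has index $2$ in it.

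The central step, and the main obstacle, is to show $\iota \in H$. My plan is to realize $\tilde a$ by an explicit fiber-preserving homeomorphism of $\Sigma_2$ covering a representative of $a$. Since $a$ is conjugate to $T\alpha_0$, which on the round sphere is the composition of the reflection $T$ through the equator with the rotation $\alpha_0$ of order $6$, its preimage under the hyperelliptic branched cover $\pi\colon\Sigma_2 \to \Sigma_{0,6}$ is an explicit periodic orientation-reversing homeomorphism of $\Sigma_2$. One then checks directly that this lift has order $12$ rather than $6$, so that $\tilde a^6$ is a nontrivial element of $\ker\Psi$ and therefore equals $\iota$. (Alternatively, one can attempt the same analysis with $\tilde b$ and order $8$, or look for a short word in the lifts whose image in $\EMS{6}$ is trivial but which represents $\iota$ upstairs; the point is only that some monomial in $\tilde a$ and $\tilde b$ lifts the identity of $\EMS{6}$ nontrivially.)

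Once $\iota \in H$ is established, $H$ contains both $\ker\Psi$ and a lift of every element of $\EMS{6}$, so $H = \EMod{}{\Sigma_2}$. Then $\tilde a$ and $\tilde b$ are two finite-order generators of $\EMod{}{\Sigma_2}$, proving the theorem.
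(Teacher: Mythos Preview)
Your setup through the inequality $[\EMod{}{\Sigma_2}:H]\le 2$ is exactly the paper's, including the observation (which the paper leaves implicit) that centrality of $\iota$ forces any lift of a torsion element to be torsion. The divergence is in how you close the gap between index $\le 2$ and index $1$.

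The paper does \emph{not} try to locate $\iota$ explicitly inside $H$. Instead it argues by abelianization: any index-$2$ subgroup is the kernel of a surjection $\varphi\colon\EMod{}{\Sigma_2}\to\Z/2\Z$, which must factor through the abelianization $(\Z/2\Z)^2$. Since $\iota$ is a product of ten Dehn twists it dies in the abelianization, so this map factors further through $\EMS{6}^{\mathrm{ab}}\cong(\Z/2\Z)^2$. One then computes the images of $a$ and $b$ there, namely $(1,1)$ and $(0,1)$, which span; hence no nontrivial $\varphi$ can kill both $\tilde a$ and $\tilde b$, ruling out index $2$. This is short, uniform, and requires no knowledge of the actual orders of the lifts.

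Your route instead asks for $\tilde a^{6}=\iota$ (equivalently, that the lift of $T\alpha_0$ has order $12$). This is plausible, and since $6$ is even both lifts of $a$ have the \emph{same} sixth power, so the question is well-posed. But you have not actually verified it: ``one then checks directly'' is a promise, not a proof, and the geometric lift you sketch still needs an honest computation on $\Sigma_2$ (or an appeal to the chain relation) to pin down which element of $\langle\iota\rangle$ you land on. If that sixth power happened to be trivial you would be forced to hunt for a longer relator, with no mechanism in hand to find one. The abelianization argument avoids this uncertainty entirely; I would recommend replacing your final step with it.
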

\begin{proof}
We have the exact sequence from Theorem \ref{BHThm}: \begin{align}\label{bhexact} 0\to \langle \iota\rangle \to \G \xrightarrow{q} \EMS{6} \to 0,\end{align} where $\iota$ is the mapping class of a hyperelliptic involution, so that $\langle \iota\rangle\cong \Z/2\Z$. Let $a, b$ be as in the previous theorem and let $\tilde a, \tilde b$ be preimages to $\G$. We claim that $\tilde a, \tilde b$ generate $\G$. Let $H = \langle \tilde a, \tilde b\rangle$ so that $q(H)=\EMS{6}$. For any $g\in\G$, we must have either $g\in H$ or $\iota g\in H$ since these are the only two preimages of $q(g)$. Hence, $[\G : H]\le 2$.

Suppose that $[\G:H]=2$. Then the quotient map $$\varphi:\G\to\G/H\cong \Z/2\Z$$ factors through the abelianization map $$\psi:\G\to(\Z/2\Z)^2,$$ say $\varphi = f\circ \psi$ for some $f:(\Z/2\Z)^2\to\Z/2\Z$. Let $\psi':\EMS{6}\to(\Z/2\Z)^2$ be the abelianization of $\EMS{6}$ given by $\psi'(\sigma_i)=(1,0)$, for $1\le i\le n-1$, and $\psi'(T) = (0,1)$. Since the hyperelliptic involution is a product of $10$ Dehn twists, its image in the abelianization is trivial (Section 5.1.3, \cite{FM}). Hence, $\psi = \psi'\circ q$. Since $$\psi(\tilde a) = \psi'(a) = (1,1)\text{ and }\psi(\tilde b) = \psi'(b) =  (0,1)$$ and $$f(1,1) = \varphi(\tilde a) = 0\text{ and }f(0,1) = \varphi(\tilde b) = 0,$$ we find that $f=0$ and $\varphi$ is not surjective. This gives a contradiction.

\end{proof}

\bibliographystyle{abbrv}

\end{document}